\newtheorem{conjec}{Conjecture}
\newtheorem{teo}{Theorem}[section]
\newtheorem{corol}[teo]{Corollary}
\newtheorem{lema}[teo]{Lemma}
\newtheorem{crite}{Criteria}
\theoremstyle{definition}
\newtheorem{Def}[teo]{Definition}
\numberwithin{equation}{section}
\begin{document}

\baselineskip=17pt

\bigskip

\title[Transcendence of the log-gamma function]{On the possible exceptions for the transcendence of the log-gamma function at rational entries}

\author[F. M. S. Lima]{F.~M.~S.~Lima}
\address{Institute of Physics\\ University of Brasilia\\P.O. Box 04455, 70919-970, Brasilia-DF, Brazil}

\email{fabio@fis.unb.br}


\date{\today}

\begin{abstract}
\quad  In a recent work [JNT \textbf{129}, 2154 (2009)], Gun and co-workers have claimed that the number $\,\log{\Gamma(x)} + \log{\Gamma(1-x)}\,$, $x$ being a rational number between $0$ and $1$, is transcendental with at most \emph{one} possible exception, but the proof presented there in that work is \emph{incorrect}.  Here in this paper, I point out the mistake they committed and I present a theorem that establishes the transcendence of those numbers with at most \emph{two} possible exceptions. As a consequence, I make use of the reflection property of this function to establish a criteria for the transcendence of $\,\log{\pi}$, a number whose irrationality is not proved yet.  This has an interesting consequence for the transcendence of the product $\,\pi \cdot e$, another number whose irrationality remains unproven.
\end{abstract}

\subjclass[2010]{Primary 11J81; Secondary 11J86, 11J91}

\keywords{Log-gamma function, Transcendental numbers}

\maketitle

\section{Introduction}
The gamma function, defined as $\Gamma(x) := \int_{\,0}^{\,\infty} e^{-t} \, {t^{x-1}} \, dt$, $x>0$, has attracted much interest since its introduction by Euler, appearing frequently in both mathematics and natural sciences problems. The transcendental nature of this function at rational values of $x$ in the open interval $(0,1)$, to which we shall restrict our attention hereafter, is enigmatic, just a few special values having their transcendence established. Such special values are: $\Gamma{(\frac12)} = \sqrt{\pi}$, whose transcendence follows from the Lindemann's proof that $\pi$ is transcendental (1882)~\cite{Lindemann}, $\Gamma{(\frac14)}$, as shown by Chudnovsky (1976)~\cite{Chudnovsky}, $\Gamma{(\frac13)}$, as proved by Le Lionnais (1983)~\cite{Lionnais}, and $\Gamma{(\frac16)}$, as can be deduced from a theorem of Schneider (1941) on the transcendence of the beta function at rational entries~\cite{Schneider}.  The most recent result in this line was obtained by Grinspan (2002), who showed that at least two of the numbers $\Gamma{(\frac15)}$, $\Gamma{(\frac25)}$ and $\pi$ are algebraically independent~\cite{Grinspan}.  For other rational values in $(0,1)$ not even irrationality was established for $\,\Gamma{(x)}$.

The function $\,\log{\Gamma{(x)}}$, known as the log-gamma function, on the other hand, received less attention with respect to the transcendence at rational points. In a recent work, however,  Gun, Murty and Rath (GMR) have presented a `theorem' asserting that~\cite{GMR}:
\begin{conjec} \label{conj:GMR}
The number $\log{\Gamma(x)} + \log{\Gamma(1-x)}$ is transcendental for any rational value of $\,x$, $0<x<1$, with at most \textbf{one} possible exception.
\end{conjec}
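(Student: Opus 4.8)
The plan is to replace $\log\Gamma(x)+\log\Gamma(1-x)$ by its closed form and then isolate the single argument that truly resists the method. Euler's reflection formula $\Gamma(x)\,\Gamma(1-x)=\pi/\sin(\pi x)$ gives, upon taking logarithms,
\[
  \log\Gamma(x)+\log\Gamma(1-x)=\log\!\left[\frac{\pi}{\sin(\pi x)}\right]=\log\pi-\log\sin(\pi x),
\]
so the quantity in Conjecture~\ref{conj:GMR} is the single logarithm $L(x):=\log[\pi/\sin(\pi x)]$. For rational $x=p/q\in(0,1)$ the number $\sin(\pi x)$ is a nonzero algebraic number, equal to $1$ exactly when $x=\tfrac12$, and $L$ satisfies the reflection symmetry $L(x)=L(1-x)$ whose only fixed point in $(0,1)$ is $x=\tfrac12$. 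There $L(\tfrac12)=2\log\Gamma(\tfrac12)=\log\pi$, a number not even known to be irrational. This already singles out $x=\tfrac12$ as the one unavoidable candidate exception and, crucially, tells me what the ``at most one'' bound really demands: since any exceptional $x_0\neq\tfrac12$ is automatically accompanied by its mirror $1-x_0$, confining the exceptional set to the lone fixed point $\{\tfrac12\}$ is equivalent to proving that \emph{$L(x_0)$ is transcendental for every rational $x_0\neq\tfrac12$}. That is strictly more than the ``at most two'' statement, which permits a whole reflection pair to be exceptional.

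First I would record precisely why the elementary difference argument cannot deliver this. If $x_1,x_2$ are rationals with $\sin(\pi x_1)\neq\sin(\pi x_2)$, then
\[
  L(x_1)-L(x_2)=\log\!\left[\frac{\sin(\pi x_2)}{\sin(\pi x_1)}\right]
\]
is the logarithm of an algebraic number other than $0$ and $1$, hence transcendental by the Hermite--Lindemann theorem, so $L(x_1)$ and $L(x_2)$ cannot both be algebraic. This eliminates every pair of exceptions with \emph{distinct} sine values but says nothing about a reflection pair $\{x_0,1-x_0\}$, where the sines coincide and the difference is identically zero. Such a pair furnishes only the single equation $\log\pi-\log\sin(\pi x_0)=\beta$ with $\beta\in\overline{\mathbb Q}$, and there is no second, independent relation inside the pair from which $\log\pi$ could be cancelled. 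The entire gap between ``two'' and ``one'' is therefore the lone assertion $\log\pi-\log\sin(\pi x_0)\notin\overline{\mathbb Q}$ for $x_0\neq\tfrac12$.

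To attack that assertion I would try to produce the missing independent relation for $\log\pi$ from an identity other than reflection, so that, once $\log\pi$ is removed, Baker's theorem on linear forms in logarithms can be applied. The natural sources are the Legendre duplication and Gauss multiplication formulas, such as $\Gamma(x_0)\,\Gamma(x_0+\tfrac12)=2^{\,1-2x_0}\sqrt{\pi}\,\Gamma(2x_0)$, which reintroduce $\tfrac12\log\pi$ alongside logarithms of algebraic numbers. Combining such an identity with the hypothetical $L(x_0)=\beta$ and eliminating $\log\pi$ would, if the remaining $\Gamma$-terms could be controlled, leave a non-trivial $\overline{\mathbb Q}$-linear form in logarithms of algebraic numbers vanishing, which Baker's theorem forbids.

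The main obstacle is exactly this elimination, and I expect the whole proof to stand or fall there. Since $\pi$ is transcendental, $\log\pi$ is \emph{not} a logarithm of an algebraic number, so Baker's theorem never applies to it directly; moreover every multiplication or duplication identity one writes down reintroduces either $\log\pi$ itself or logarithms of further $\Gamma$-values whose transcendence is equally unknown, so the linear system never closes. In effect, establishing $L(x_0)\notin\overline{\mathbb Q}$ for even a single $x_0\neq\tfrac12$ amounts to showing that $\log\pi$ does not differ from a logarithm of an algebraic number by an algebraic number --- a statement that would follow at once from Schanuel's conjecture but is presently open, and is precisely the point at which the argument of \cite{GMR} collapses. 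Closing it unconditionally seems to require a genuinely new transcendence input rather than any rearrangement of Hermite--Lindemann and Baker.
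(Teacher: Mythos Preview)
Your analysis is accurate, but note that the paper does \emph{not} prove this statement either: it is deliberately labeled a \emph{Conjecture}, and the paper's whole point is that the original proof by Gun--Murty--Rath is flawed precisely for the reason you isolate. What the paper actually establishes is the weaker Theorem~\ref{teo:meu} (``at most \emph{two} exceptions''), obtained by the very difference argument you describe in your second paragraph --- split $(0,1)$ into the two monotonicity intervals of $\sin(\pi x)$, take differences $L(x_1)-L(x_2)$ within each, and invoke Baker. The paper's Appendix then shows only the conditional statement that \emph{if} Conjecture~\ref{conj:GMR} holds, the unique exception must sit at $x=\tfrac12$.

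Your diagnosis of the gap is exactly right and matches the paper's implicit stance: passing from ``at most two'' to ``at most one'' is equivalent to ruling out an exceptional reflection pair $\{x_0,1-x_0\}$ with $x_0\neq\tfrac12$, i.e.\ to proving $\log\pi-\log\sin(\pi x_0)\notin\overline{\mathbb Q}$ for some (hence every) such $x_0$. Since $\pi$ is transcendental, $\log\pi$ is not a logarithm of an algebraic number, so Baker's theorem cannot touch it directly; and, as you observe, duplication/multiplication identities only reintroduce $\log\pi$ or further unknown $\log\Gamma$ values, so the system never closes. Your conclusion that this step appears to require input at the level of Schanuel's conjecture is the honest one, and it is consistent with the paper's decision to state Conjecture~\ref{conj:GMR} without proof and to supply only the ``two-exception'' theorem in its place.
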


This has some interesting consequences. For a better discussion of these consequences, let us define a function $f\!: (0,1) \rightarrow \mathbb{R}_{\,+}$ as follows:
\begin{equation}
f(x) := \log{\Gamma(x)} + \log{\Gamma(1-x)} \, .
\label{eq:fx}
\end{equation}
Note that $f(1-x) = f(x)$, which implies that $f(x)$ is symmetric with respect to $x=\frac12$.  By taking into account the well-known \emph{reflection property} of the gamma function
\begin{equation}
\Gamma{(x)} \cdot \Gamma{(1-x)} = \frac{\pi}{\sin{(\pi \, x)}} \, ,
\label{eq:reflex}
\end{equation}
valid for all $x \not \in \mathbb{Z}$, and being $\, \log{\left[\,\Gamma{(x)} \cdot \Gamma{(1-x)}\right]} = \log{\Gamma(x)} + \log{\Gamma(1-x)}$, one easily deduces that
\begin{equation}
f(x) = \log{\left[\frac{\pi}{\sin{(\pi \, x)}}\right]} = \log{\pi} -\log{\sin{(\pi \, x)}} \, .
\label{eq:seno}
\end{equation}
From this logarithmic expression, one promptly deduces that $f(x)$ is differentiable (hence continuous) in the interval $(0,1)$, its derivative being $f\,'(x) = -\,\pi \, / \tan{(\pi x)}$. The symmetry of $f(x)$ around $x=\frac12$ can be taken into account for showing that, being Conjecture~\ref{conj:GMR} true, the only exception would be for $\,x = \frac12$ (see the Appendix). From Eq.~(\ref{eq:seno}), we promptly deduce that $\,\log{\pi} -\log{\sin{(\pi \, x)}}\,$ is transcendental for all rational $x$ in $(0,1)$, the only possible exception being $f(\frac12) = \log{\pi} = 1.1447298858\!\ldots$, which is an interesting number whose irrationality is not yet established.  All these consequences would be impressive, but the proof presented there in Ref.~\cite{GMR} for Conjecture~\ref{conj:GMR} is \emph{incorrect}. This is because those authors implicitly assume that $\,f(x_1) \ne f(x_2)\,$ for every pair of distinct rational numbers $\,x_1, x_2 \in (0,1)$, which is not true, as may be seen in Fig.~\ref{fig:fx}, where the symmetry of $f(x)$ around $x=\frac12$ can be appreciated. To be explicit, let me exhibit a simple counterexample: for the pair $x_1=\frac14$ and $x_2=\frac34$, Eq.~(\ref{eq:seno}) yields $f(x_1) = f(x_2) = \log{\pi} + \log{\sqrt{2}}$ and then  $f(x_1) - f(x_2) = 0$.\footnote{In fact, a null result is found for every pair of rational numbers $\,x_1, x_2 \in (0,1)$ with $x_1 + x_2 = 1$ (i.e., symmetric with respect to $\,x = 1/2$).}  This \emph{null} result clearly makes it invalid their conclusion that $f(x_1) - f(x_2)$ is a \emph{non-null} Baker period.

Here in this short paper, I take Conjecture~\ref{conj:GMR} on the transcendence of $f(x) = \log{\Gamma(x)} + \log{\Gamma(1-x)}$ into account for setting up a theorem establishing that there are at most \emph{two} possible exceptions for the transcendence of $f(x)$, $x$ being a rational in $(0,1)$. This theorem is proved here based upon a careful analysis of the monotonicity of $f(x)$, taking also into account its obvious symmetry with respect to $x=\frac12$. Interestingly, this yields a criteria for the transcendence of $\,\log{\pi}$, an important number in the study of the algebraic nature of special values of a general class of $L$--functions~\cite{GMR2}. Finally, I show that if $\,\,\log{\!(k \, \pi)}\,$ is algebraic for some algebraic $\,k\,$ then $\,\pi \, e$, another number whose irrationality is not proved, has to be transcendental.

\section{Transcendence of $\log{\Gamma{(x)}} + \log{\Gamma{(1-x)}}$ and exceptions}

For simplicity, let us define $\,\mathbb{Q}_{(0,1)}\,$ as $\,\mathbb{Q} \, \bigcap \, (0,1)$, i.e. the set of all rational numbers in the open interval $(0,1)$, which is a countable infinite set.  My theorem on the transcendence of $\,\log{\Gamma{(x)}} + \log{\Gamma{(1-x)}}\,$ depends upon the fundamental theorem of Baker (1966) on the transcendence of linear forms in logarithms, stated below.

\begin{lema}[Baker] \label{lem:Baker}
Let $\alpha_1, \ldots, \alpha_n$ be nonzero algebraic numbers and $\beta_1, \ldots, \beta_n$ be algebraic numbers. Then the number
\begin{eqnarray*}
\beta_1 \, \log{\alpha_1} + \ldots + \beta_n \, \log{\alpha_n}
\end{eqnarray*}
is either zero or transcendental.  The latter case arises if $\, \log{\alpha_1}, \ldots, \log{\alpha_n}$ are linearly independent over $\,\mathbb{Q}$ and $\beta_1, \ldots, \beta_n$ are not all zero.
\end{lema}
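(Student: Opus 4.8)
The plan is not to reprove this result: it is A.\ Baker's theorem on linear forms in logarithms of algebraic numbers, obtained by him in 1966, and it is recorded here only because it is the analytic engine behind every transcendence statement of this section. Accordingly the ``proof'' reduces to a reference to Baker's original papers and to the systematic treatment in his monograph on transcendental number theory; for the reader's convenience I would only sketch the architecture of the argument and indicate the one place where its depth resides.

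The case $n=1$ is already classical, being the Hermite--Lindemann theorem: if $\alpha\neq 0$ is algebraic, then $\log{\alpha}$ is either $0$ (precisely when $\alpha=1$) or transcendental, for otherwise $\alpha=e^{\log{\alpha}}$ would exhibit an algebraic value of the exponential function at a nonzero algebraic argument. For arbitrary $n$ one argues by contradiction. Suppose $\Lambda:=\beta_1\log{\alpha_1}+\cdots+\beta_n\log{\alpha_n}$ were a nonzero algebraic number; after a routine normalisation this amounts to saying that $1,\log{\alpha_1},\ldots,\log{\alpha_n}$ are linearly dependent over $\overline{\mathbb{Q}}$. One then constructs, by Siegel's lemma (a pigeonhole argument in an underdetermined system of linear equations over a number field), a non-trivial auxiliary entire function — a polynomial combination of a power of $z$ and of the exponentials $\alpha_1^{\lambda_1 z},\ldots,\alpha_n^{\lambda_n z}$, one of which is re-expressed through the relation defining $\Lambda$ — that vanishes to high order at an initial block of integer points. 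A succession of extrapolation steps, each driven by the Schwarz/maximum-modulus estimate, propagates the vanishing: the function remains extraordinarily small at far more points than was built in. Dividing out the zeros one already knows produces a value of the function that is a nonzero algebraic number, hence bounded below by Liouville's inequality (equivalently, the fundamental inequality coming from the product formula), while the analytic estimate forces that same value to be far too small in absolute value. The hypothesis that $\log{\alpha_1},\ldots,\log{\alpha_n}$ are linearly independent over $\mathbb{Q}$ enters here to guarantee, via the non-vanishing of a suitable Wronskian or Vandermonde determinant, that the auxiliary function is not identically zero, so the contradiction is genuine.

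The hard part of this scheme — and exactly the point where Baker went beyond the Gelfond--Schneider method, which reaches only $n=2$ — is the multidimensional form of the extrapolation, together with the accompanying zero estimate: one must calibrate the many parameters (the partial degrees, the number of extrapolation rounds, the order of vanishing imposed) with enough precision that the analytic smallness defeats the arithmetic lower bound for every $n$, not just for two. None of this machinery is needed in what follows: I would use Lemma~\ref{lem:Baker} as a black box, the genuinely new ingredient of the next theorem being the way this lemma is combined with the strict monotonicity of $f$ on each half of $(0,1)$ and with its reflection symmetry $f(x)=f(1-x)$.
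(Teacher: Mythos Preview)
Your proposal is correct and matches the paper's approach: the paper's own ``proof'' of this lemma is nothing more than a citation to Theorems~2.1 and~2.2 of Baker's monograph. Your additional sketch of the auxiliary-function/extrapolation architecture is not in the paper, but it is accurate and does not conflict with anything there; you are simply being more generous to the reader than the paper is.
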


\begin{proof} $\,$~See theorems 2.1 and 2.2 of Ref.~\cite{BakerBook}. 
\end{proof}

Now, let us define a \emph{Baker period} according to Refs.~\cite{Sarada,Zagier}.

\begin{Def}[Baker period] \label{def:BakerPeriod}
A Baker period is any linear combination in the form $\beta_1 \, \log{\alpha_1} + \ldots + \beta_n \, \log{\alpha_n}$, with $\alpha_1, \ldots, \alpha_n$ nonzero algebraic numbers and $\beta_1, \ldots, \beta_n$ algebraic numbers.
\end{Def}

From Baker's theorem, it follows that

\begin{corol} \label{cor:naonulo}
Any \emph{non-null} Baker period is a transcendental number.
\end{corol}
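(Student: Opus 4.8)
The plan is to read this off directly from Baker's theorem (Lemma~\ref{lem:Baker}); no machinery beyond that statement is needed. Let $P = \beta_1 \log{\alpha_1} + \ldots + \beta_n \log{\alpha_n}$ be a Baker period in the sense of Definition~\ref{def:BakerPeriod}, so that each $\alpha_j$ is a nonzero algebraic number and each $\beta_j$ is algebraic, and suppose $P \ne 0$.

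The first step I would carry out is a harmless normalization of the coefficients. If some $\beta_j = 0$, I simply delete that term: this alters neither the value of $P$ nor the fact that what remains is a linear form in logarithms of nonzero algebraic numbers with algebraic coefficients. If this pruning removes every term, then $P = 0$, contradicting the hypothesis that $P$ is non-null; hence after pruning at least one term survives and every remaining $\beta_j \ne 0$, while every $\alpha_j \ne 0$ by the definition of a Baker period.

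With the hypotheses of Lemma~\ref{lem:Baker} now literally in force, that lemma asserts that $P$ is either zero or transcendental. Since $P \ne 0$ by assumption, the first alternative is excluded, and therefore $P$ is transcendental, which is the claim.

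I do not expect any real obstacle here: the entire content of the corollary is supplied by Baker's theorem, and the only thing to verify is the trivial bookkeeping (nonzero $\alpha_j$'s, which are built into Definition~\ref{def:BakerPeriod}, and the removal of vanishing $\beta_j$'s) needed to invoke it. The statement is isolated as a corollary only because it is the precise form in which Baker's theorem will be applied later, where an exceptional value is exhibited as a Baker period and one must then argue that it can fail to be transcendental only by being equal to zero.
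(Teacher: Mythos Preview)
Your proposal is correct and matches the paper's approach, which simply records the corollary as an immediate consequence of Baker's theorem without further argument. The pruning of zero $\beta_j$'s is harmless but unnecessary here, since Lemma~\ref{lem:Baker} as stated already allows arbitrary algebraic $\beta_j$ and directly gives the zero-or-transcendental dichotomy.
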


Now, let us demonstrate the following theorem, which comprises the main result of this paper.

\begin{teo}[Main result] \label{teo:meu}
The number $\,\log{\Gamma(x)} + \log{\Gamma(1-x)}\,$ is transcendental for all $\,x \in \mathbb{Q}_{(0,1)}$, with at most \emph{two} possible exceptions.
\end{teo}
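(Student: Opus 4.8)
The plan is to set the gamma function aside and work with the closed form $f(x) = \log\pi - \log\sin(\pi x)$ supplied by Eq.~(\ref{eq:seno}), arguing by contradiction: suppose there were three \emph{distinct} rationals $x_1, x_2, x_3 \in \mathbb{Q}_{(0,1)}$ at which $f$ is algebraic. The first ingredient is the elementary fact that, for every $x = p/q \in \mathbb{Q}_{(0,1)}$, the number $\sin(\pi x)$ is a \emph{nonzero} algebraic number: $e^{i\pi x}$ is a root of $z^{2q}-1$ and hence algebraic, so $\sin(\pi x) = \bigl(e^{i\pi x} - e^{-i\pi x}\bigr)/(2i)$ is algebraic, and it is strictly positive because $\pi x \in (0,\pi)$.

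The second step is where the mistake in Ref.~\cite{GMR} gets repaired. For any two of the three points, say $x_j \ne x_k$, the two copies of $\log\pi$ cancel, giving
\[
f(x_j) - f(x_k) = \log\sin(\pi x_k) - \log\sin(\pi x_j) = \log\!\left[\frac{\sin(\pi x_k)}{\sin(\pi x_j)}\right] ,
\]
which, by the first step, is a Baker period in the sense of Definition~\ref{def:BakerPeriod} — a single term $1 \cdot \log\alpha$ with $\alpha = \sin(\pi x_k)/\sin(\pi x_j)$ a nonzero algebraic number. If $f(x_j)$ and $f(x_k)$ are both algebraic, their difference is algebraic, so by Corollary~\ref{cor:naonulo} this Baker period cannot be transcendental and must therefore be \emph{null}; that is, $\sin(\pi x_j) = \sin(\pi x_k)$. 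This is precisely the eventuality that GMR implicitly excluded.

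The third step turns this into a contradiction via the monotonicity and symmetry of $f$. Since $f'(x) = -\pi\cot(\pi x)$ is negative on $(0,\tfrac12)$ and positive on $(\tfrac12,1)$, the function $f$ is strictly decreasing on $(0,\tfrac12)$, strictly increasing on $(\tfrac12,1)$, and symmetric about $x=\tfrac12$; equivalently, on $(0,1)$ the map $x \mapsto \sin(\pi x)$ takes each value of $(0,1)$ at exactly two points, symmetric with respect to $\tfrac12$, and the value $1$ only at $x=\tfrac12$. Hence $\sin(\pi x_j)=\sin(\pi x_k)$ with $x_j\ne x_k$ forces $x_j + x_k = 1$. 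Applying this to the three pairs among $x_1, x_2, x_3$ yields $x_1+x_2 = x_1+x_3 = x_2+x_3 = 1$, so $x_2 = x_3$, contradicting distinctness. Therefore $f$ is algebraic at no more than two points of $\mathbb{Q}_{(0,1)}$ — and, if at exactly two, those form a symmetric pair $\{x_0, 1-x_0\}$, in accordance with $f(x)=f(1-x)$. At every other rational $x$ in $(0,1)$ the number $\log\Gamma(x)+\log\Gamma(1-x)$ is transcendental, which is the statement of the theorem; note that the argument is unconditional and does not invoke Conjecture~\ref{conj:GMR}.

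The hard part is not any calculation but recognizing that the Baker period $f(x_j) - f(x_k) = \log\sin(\pi x_k) - \log\sin(\pi x_j)$ need not be nonzero: it vanishes precisely when $\sin(\pi x_j) = \sin(\pi x_k)$, i.e.\ when $x_j + x_k = 1$. This is exactly the gap in Ref.~\cite{GMR}, whose argument silently assumed this difference is always nonzero and thereby missed the symmetric pairs $\{x_0, 1-x_0\}$. Once this is noticed, Corollary~\ref{cor:naonulo} forces any two algebraic values of $f$ to coincide, and the only remaining work — that three distinct numbers of $(0,1)$ cannot be pairwise symmetric about $\tfrac12$, together with the algebraicity of $\sin(\pi x)$ for rational $x$ — is entirely routine.
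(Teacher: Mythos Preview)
Your proof is correct and follows essentially the same approach as the paper: reduce to $f(x)=\log\pi-\log\sin(\pi x)$, observe that any difference $f(x_j)-f(x_k)$ is a Baker period in logarithms of the algebraic numbers $\sin(\pi x_j),\sin(\pi x_k)$, and then use that $x\mapsto\sin(\pi x)$ is at most two-to-one on $(0,1)$ to bound the number of algebraic values. The only cosmetic difference is that the paper splits $(0,1)$ into the two monotone halves $(0,\tfrac12]$ and $[\tfrac12,1)$ and gets one exception per half, whereas you argue directly that three putative exceptions would have to be pairwise symmetric about $\tfrac12$, which is impossible; both arguments encode the same two-to-one fact.
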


\begin{proof}
$\,$~Let $f(x)$ be the function defined in Eq.~(\ref{eq:fx}). From Eq.~(\ref{eq:seno}), $f(x) = \log{\pi} -\log{\sin{(\pi \, x)}}$ for all real $x \in (0,1)$.  Let us divide the open interval $(0,1)$ into two adjacent subintervals by doing $(0,1) \equiv (0,\frac12] \, \bigcup \, [\frac12,1)$. Note that $\sin{(\pi \, x)}$ --- and thus $f(x)$ --- is either a monotonically increasing or decreasing function in each subinterval.  Now, suppose that $f(x_1)$ and $f(x_2)$ are both algebraic numbers, for some pair of distinct real numbers $x_1$ and $x_2$ in $(0,\frac12]$. Then, the difference
\begin{equation}
f(x_2) - f(x_1) = \log{\sin{(\pi \, x_1)}} - \log{\sin{(\pi \, x_2)}}
\end{equation}
will, itself, be an algebraic number.  However, as the sine of any rational multiple of $\pi$ is an algebraic number~\cite{Niven,Dresden}, then Lemma~\ref{lem:Baker} guarantees that, being $x_1,x_2 \in \mathbb{Q}$, then $\log{\sin{(\pi \, x_1)}} - \log{\sin{(\pi \, x_2)}}$ is either null or transcendental. Since $\sin{(\pi x)}$ is a continuous, monotonically increasing function in $(0,\frac12)$, then $\sin{\pi x_1} \ne \sin{\pi x_2}$ for all $x_1 \ne x_2$ in $(0,\frac12]$. Therefore, $\log{\sin{(\pi \, x_1)}} \ne \log{\sin{(\pi \, x_2)}}$ and then $\log{\sin{(\pi \, x_1)}} - \log{\sin{(\pi \, x_2)}}$ is a \emph{non-null} Baker period. From Corol.~\ref{cor:naonulo}, we know that non-null Baker periods are transcendental numbers, which contradicts our initial assumption. Then, there is at most one exception for the transcendence of $f(x)$, $x \in \mathbb{Q} \, \bigcap \, (0,\frac12]$.  Clearly, as $\sin{(\pi x)}$ is a continuous and monotonically decreasing function for $x \in [\frac12,1)$, an analogue assertion applies to this complementary subinterval, which yields another possible exception for the transcendence of $f(x)$, $x \in \mathbb{Q} \, \bigcap \, [\frac12,1)$.
\end{proof}

It is most likely that not even an exception takes place for the transcendence of $\log{\Gamma{(x)}} + \log{\Gamma{(1-x)}}$ with $x \in \mathbb{Q}_{(0,1)}$. If this is true, then the number $\,f(\frac12) = \log{\pi}\,$ would be transcendental. If there are exceptions, however, then their quantity --- either one or two, according to Theorem~\ref{teo:meu} --- will determine the transcendence of $\,\log{\pi}$. The next theorem summarizes these connections between the existence of exceptions to the transcendence of $f(x)$, $x \in \mathbb{Q}_{(0,1)}$, and the transcendence of $\,\log{\pi}$.

\begin{teo}[Exceptions] \label{teo:excecoes}
With respect to the possible exceptions to the transcendence of $\,\log{\Gamma{(x)}} + \log{\Gamma{(1-x)}}$, $x \in \mathbb{Q}_{(0,1)}$, exactly one of the following statements is true:
\begin{itemize}
\item[(i)] There are no exceptions, hence $\,\log{\pi}$ is a transcendental number;

\item[(ii)] There is only one exception and it has to be for $x=\frac12$, hence $\,\log{\pi}$ is an algebraic number;

\item[(iii)] There are exactly two exceptions for some $x \ne \frac12$, hence $\,\log{\pi}$ is a transcendental number.
\end{itemize}
\end{teo}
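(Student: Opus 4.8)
The plan is to derive Theorem~\ref{teo:excecoes} from Theorem~\ref{teo:meu} together with the symmetry $f(1-x)=f(x)$ and the single evaluation $f(\frac12)=\log{\pi}$, which follows from Eq.~(\ref{eq:seno}) because $\sin{(\pi/2)}=1$. First I would observe that the set $E\subseteq\mathbb{Q}_{(0,1)}$ of exceptions to the transcendence of $f$ is symmetric about $x=\frac12$: if $x_0\in E$, i.e. $f(x_0)$ is not transcendental, then $f(1-x_0)=f(x_0)$ is not transcendental either, so $1-x_0\in E$. Hence any exception $x_0\neq\frac12$ is automatically accompanied by a distinct second exception $1-x_0$, while $x=\frac12$ is its own mirror image.

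Next I would invoke Theorem~\ref{teo:meu} ($|E|\le 2$) to enumerate the only possibilities for $E$. If $|E|=0$ we are in case (i). If $|E|=1$, the lone exception $x_0$ must satisfy $x_0=1-x_0$ (otherwise $1-x_0$ would be a second, distinct exception), forcing $x_0=\frac12$; this is case (ii). If $|E|=2$, then $E$ is a two-element set closed under the involution $x\mapsto 1-x$ whose only fixed point is $\frac12$; such a set cannot contain $\frac12$ (that would force a third element $1-x_0$ with $x_0\neq\frac12$), so $E=\{x_0,1-x_0\}$ for some $x_0\neq\frac12$, which is case (iii). These three cases are mutually exclusive and exhaustive, matching the statement.

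Finally I would read off the arithmetic nature of $\log{\pi}=f(\frac12)$ in each case. In cases (i) and (iii) we have $\frac12\notin E$, so $f(\frac12)=\log{\pi}$ is transcendental. In case (ii) we have $\frac12\in E$, so $f(\frac12)=\log{\pi}$ fails to be transcendental; being a real number, it is therefore algebraic. I do not expect a genuine obstacle here, as the argument is essentially bookkeeping once Theorem~\ref{teo:meu} is available; the only point requiring care is the logical reading of the word ``exception'', namely that since $f$ is real-valued, ``not transcendental'' is equivalent to ``algebraic'', which is precisely what makes the conclusion of~(ii) non-vacuous.
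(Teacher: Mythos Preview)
Your argument is correct and follows essentially the same route as the paper: both proofs use the symmetry $f(1-x)=f(x)$ together with the bound $|E|\le 2$ from Theorem~\ref{teo:meu} to force the trichotomy, and then read off the nature of $\log\pi=f(\tfrac12)$ according to whether $\tfrac12\in E$. Your presentation via the involution $x\mapsto 1-x$ on the exception set $E$ is a slightly cleaner packaging of the same logic the paper uses case by case.
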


\begin{proof}
$\,$~If $f(x) = \log{\Gamma{(x)}} + \log{\Gamma{(1-x)}}$ is a transcendental number for every $x \in \mathbb{Q}_{(0,1)}$, item(i), it suffices to put $x=\frac12$ in Eq.~(\ref{eq:seno}) for finding that $f(\frac12) = \log{\pi}$ is transcendental.  If there is \emph{exactly one} exception, item (ii), then it has to take place for $x = \frac12$, otherwise (i.e., for $x \ne \frac12$) the symmetry property $f(1-x)=f(x)$ would yield algebraic values for \emph{two} distinct values of the argument. Therefore, $f(\frac12)=\log{\pi}$ is the only (algebraic) exception in this case. If there are two exceptions, item (iii), then they have to be symmetric with respect to $x =\frac12$, otherwise, by the property $f(1-x)=f(x)$, we would find more than two exceptions, which is prohibited by Theorem~\ref{teo:meu}. Indeed, if one of the two exceptions is for $x=\frac12$, then the other, for $x \ne \frac12$, would yield a third exception, corresponding to $\,1-x \ne \frac12$, which is again prohibited by Theorem~\ref{teo:meu}. Then the two exceptions are for values of the argument distinct from $\frac12$ and then $f(\frac12) = \log{\pi}$ is a transcendental number. 
\end{proof}

From this theorem, it is straightforward to conclude that

\begin{crite}[transcendence of $\,\log{\pi}$]
\label{criterio}
The number $\,\log{\pi}$ is algebraic if and only if $\,\log{\Gamma{(x)}} + \log{\Gamma{(1-x)}}$ is a transcendental number for every $x \in \mathbb{Q}_{(0,1)}$, except $x = \frac12$.
\end{crite}


An interesting consequence of Criteria~\ref{criterio}, together the famous Hermite-Lindemann (HL) theorem, is that if the number $\,\log{\Gamma{(x)}} + \log{\Gamma{(1-x)}}\,$ is algebraic for some $\,x \in \mathbb{Q}_{(0,1)}\,$ then the number $\,\pi \cdot e = 8.5397342226\!\ldots$, another number for which not even an irrationality proof is known, has to be transcendental. Let me proof this assertion based upon a logarithmic version of the HL theorem.

\begin{lema}[HL] \label{lem:HL}
For any non-zero complex number $w$, one at least of the two numbers $w$ and $\exp{(w)}$ is transcendental.
\end{lema}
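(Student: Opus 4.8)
The statement is the Hermite--Lindemann theorem, and the plan is to prove the logically equivalent assertion that $e^{\alpha}$ is transcendental for every nonzero algebraic number $\alpha$. This yields the lemma at once: given $w \neq 0$, either $w$ is transcendental and we are done, or $w$ is algebraic and nonzero, in which case $e^{w}$ is transcendental by the equivalent assertion. I would argue by contradiction, supposing that both $w = \alpha \neq 0$ and $\beta := e^{\alpha}$ are algebraic, and aim to manufacture a single algebraic quantity that is forced to be a nonzero rational integer while also having modulus less than $1$.

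The first step is to recast this hypothesis as a vanishing exponential sum. The trivial identity $\beta \cdot e^{0} - 1 \cdot e^{\alpha} = 0$ is a nontrivial linear relation, with algebraic coefficients $\beta$ and $-1$ (not both zero), among the exponentials of the two distinct algebraic numbers $0$ and $\alpha$. Hence it suffices to prove the linear-independence core of the theory: if $\gamma_{1}, \dots, \gamma_{n}$ are distinct algebraic numbers and $c_{1}, \dots, c_{n}$ are algebraic numbers, not all zero, then $\sum_{j} c_{j}\, e^{\gamma_{j}} \neq 0$. Applying this with $n = 2$, $(\gamma_{1}, \gamma_{2}) = (0, \alpha)$ and $(c_{1}, c_{2}) = (\beta, -1)$ contradicts the relation above and proves the lemma.

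For this core statement I would use Hermite's auxiliary-function method, as developed in the references cited for Baker's theorem (see \cite{BakerBook}) and going back to \cite{Lindemann}. For a large prime $p$ and each index $i$, I would introduce a polynomial of the shape $f_{i}(x) = \ell^{\,N}\, \bigl[\prod_{k=1}^{n}(x - \gamma_{k})\bigr]^{p} / \bigl[(p-1)!\,(x - \gamma_{i})\bigr]$, where $\ell$ is a common denominator making every $\ell\,\gamma_{k}$ an algebraic integer and $N$ is chosen so that the coefficients are algebraic integers; note that $f_{i}$ vanishes to order $p$ at each $\gamma_{k}$ with $k \neq i$ and to order $p-1$ at $\gamma_{i}$. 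I then form the Hermite integrals $I_{i} = \int_{0}^{\gamma_{i}} e^{\gamma_{i} - x} f_{i}(x)\, dx$. Repeated integration by parts expresses each $I_{i}$ as $e^{\gamma_{i}}$ times a sum of values $f_{i}^{(m)}(0)$ minus a sum of values $f_{i}^{(m)}(\gamma_{i})$, and the assumed relation $\sum_{i} c_{i} e^{\gamma_{i}} = 0$ is exactly what cancels the exponential factors when I assemble $J := \sum_{i} c_{i} I_{i}$. The payoff of the $(p-1)!$ normalization is arithmetic: every derivative value $f_{i}^{(m)}(\gamma_{k})$ is an algebraic integer divisible by $p$, with the single exception of the distinguished term $f_{i}^{(p-1)}(\gamma_{i})$, which is not divisible by $p$ once $p$ exceeds the norms of $\ell$, the $c_{i}$, and the nonzero differences $\gamma_{i} - \gamma_{k}$.

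The final step plays the resulting algebraic integer against its own size. Summing over the full set of conjugates of the $\gamma_{i}$ and the $c_{i}$ produces a symmetric combination that lies in $\mathbb{Z}$; the divisibility analysis shows $p \nmid J$, so $J$ is a nonzero integer with $|J| \geq 1$. On the other hand, the factor $1/(p-1)!$ against the trivial bound on the integrand over the relevant segments forces $|J| \leq C^{p}/(p-1)! \to 0$ as $p \to \infty$, and the same bound survives the symmetrization, so for $p$ large this makes the integer smaller than $1$ in modulus --- the desired contradiction. The main obstacle I anticipate is precisely this bookkeeping over conjugates: one must symmetrize over the Galois orbits of both the exponents and the coefficients so that the final quantity is a genuine rational integer, while simultaneously preserving the non-divisibility by $p$ of the single distinguished term and controlling the archimedean size uniformly in $p$. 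Getting these three requirements --- rationality of $J$, the condition $p \nmid J$, and $|J| < 1$ --- to hold at once is the technical heart of the argument.
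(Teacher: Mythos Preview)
Your sketch is a faithful outline of the classical Hermite--Lindemann (indeed, Lindemann--Weierstrass) argument and is correct in its essentials: the reduction to the linear-independence statement, the auxiliary polynomial with a distinguished root of lower multiplicity, the Hermite integral identity via integration by parts, and the tension between the $p$-adic lower bound and the archimedean decay $C^{p}/(p-1)!$. The main delicate point you correctly flag --- symmetrizing over Galois conjugates of both the exponents and the coefficients so that $J$ becomes a rational integer while preserving $p \nmid J$ --- is exactly where the work lies, and your description of it is accurate.

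That said, the paper takes an entirely different route: it does not prove the lemma at all, but simply cites it as a classical result (referring to Baker--W\"ustholz, \emph{Logarithmic Forms and Diophantine Geometry}, Section~1.2, and references therein). This is appropriate in context, since Hermite--Lindemann is foundational and the paper's contribution lies elsewhere. Your approach buys a self-contained argument and demonstrates understanding of the underlying machinery; the paper's approach buys brevity and keeps the focus on the new results. For the purposes of this paper, a one-line citation is the expected treatment, and your detailed sketch, while correct, would be out of proportion to the lemma's role.
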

\begin{proof}
$\,$~See Ref.~\cite{Baker2} and references therein.
\end{proof}

\begin{lema}[HL, logarithmic version] \label{lem:HL1}
For any positive real number $z$, $z \ne 1$, one at least of the real numbers $z$ and $\log{z}$ is transcendental.
\end{lema}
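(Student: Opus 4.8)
The plan is to derive this logarithmic version directly from Lemma~\ref{lem:HL} (Hermite--Lindemann) by a suitable choice of the complex parameter $w$. Given a positive real $z \ne 1$, I would set $w := \log{z}$, where $\log$ denotes the usual real (natural) logarithm; this is legitimate because $z > 0$ guarantees that $\log{z}$ is a well-defined real number, and $z \ne 1$ guarantees that $w \ne 0$, so that $w$ is an admissible input for Lemma~\ref{lem:HL}.

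With this choice one has $\exp{(w)} = \exp{(\log{z})} = z$. Lemma~\ref{lem:HL} then asserts that at least one of the two numbers $w$ and $\exp{(w)}$ is transcendental; translating back, at least one of $\log{z}$ and $z$ is transcendental, which is exactly the claim. Since both $z$ and $\log{z}$ are real in this situation, no issue of choosing a branch of the logarithm or of passing between real and complex transcendence arises.

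There is essentially no obstacle here: the only points requiring a word of justification are that $w \ne 0$ (which is where the hypothesis $z \ne 1$ is used) and that $w$ is real (which is where $z > 0$ is used), after which the conclusion is an immediate specialization of Lemma~\ref{lem:HL}. I would therefore present the argument in just a few lines, emphasizing only the verification of these hypotheses before invoking Hermite--Lindemann.
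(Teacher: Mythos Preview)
Your argument is correct and is essentially identical to the paper's own proof, which also sets $w=\log z$ in Lemma~\ref{lem:HL} and notes that the hypotheses on $z$ ensure $w$ is well-defined and nonzero. If anything, you spell out more carefully than the paper why $z>0$ and $z\ne 1$ are needed.
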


\begin{proof}
$\,$~It is enough to put $w=\log{z}$, $z$ being a non-negative real number, in Lemma~\ref{lem:HL} and to exclude the singularity of $\log{z}$ at $z=0$.
\end{proof}

\begin{teo}[Transcendence of $\,\pi \, e$]
If the number $\,\log{\Gamma{(y)}} + \log{\Gamma{(1-y)}}\,$ is algebraic for some $\,y \in \mathbb{Q}_{(0,1)}$, then the number $\,\pi \, e\,$ is transcendental.
\end{teo}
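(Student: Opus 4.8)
The plan is to combine the reflection identity~(\ref{eq:seno}) with the logarithmic Hermite--Lindemann theorem, Lemma~\ref{lem:HL1}. Assume the hypothesis: $f(y) := \log\Gamma(y) + \log\Gamma(1-y)$ is algebraic for some $y \in \mathbb{Q}_{(0,1)}$. By Eq.~(\ref{eq:seno}) this says precisely that $\log\pi - \log\sin(\pi y)$ is algebraic, so the strategy is to feed the number $\pi e/\sin(\pi y)$ into Lemma~\ref{lem:HL1} and read off that it must be transcendental, then clear the algebraic factor $\sin(\pi y)$.

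First I would record the two elementary facts that make this go through. Since $y$ is rational, $a := \sin(\pi y)$ is the sine of a rational multiple of $\pi$, hence algebraic by~\cite{Niven,Dresden}; and since $\pi y \in (0,\pi)$ we have $0 < a \le 1$, so $a$ is a \emph{nonzero} algebraic number. Moreover, because $\pi/\sin(\pi x) \ge \pi > 1$ on $(0,1)$, we have $f(y) = \log[\pi/\sin(\pi y)] > 0$, a positivity I will use to check a nonvanishing condition below.

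Next, set $z := \pi e / a$. On the one hand, $\log z = \log\pi + 1 - \log a = f(y) + 1$, which is algebraic under our hypothesis (it is $f(y)$ plus $1$). On the other hand, $z = \pi e / a \ge \pi e > 1$, so $z \ne 1$ and Lemma~\ref{lem:HL1} applies to $z$: at least one of $z$ and $\log z$ is transcendental. Since $\log z = f(y)+1$ is algebraic, it must be $z$ that is transcendental. Finally, $\pi \cdot e = a\,z$ is the product of a nonzero algebraic number with a transcendental number, hence transcendental, which is the desired conclusion.

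The argument is short and has no real obstacle; the only points that need (minimal) care are the two bookkeeping facts above — that $\sin(\pi y)$ is algebraic and nonzero, and that $z \ne 1$ so that Lemma~\ref{lem:HL1} is legitimately applicable. One could equivalently run the last steps through the exponential form, Lemma~\ref{lem:HL}: from $e^{f(y)} = \pi/\sin(\pi y)$ one gets $\pi \cdot e = \sin(\pi y)\,\exp\big(f(y)+1\big)$, where $f(y)+1$ is a nonzero algebraic number (here the positivity $f(y) > 0$ guarantees $f(y)+1 \ne 0$), so $\exp\big(f(y)+1\big)$ is transcendental and the conclusion follows exactly as before.
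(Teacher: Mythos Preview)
Your proof is correct and is essentially the same as the paper's: with $k(y)=1/\sin(\pi y)$ the paper considers $k(y)\,\pi e$, which is exactly your $z=\pi e/a$, shows $\log z = f(y)+1$ is algebraic, applies Lemma~\ref{lem:HL1} after checking $z\ne 1$ via $k(y)\ge 1$, and then divides by the nonzero algebraic $k(y)$. The only cosmetic difference is notation ($a$ versus $k(y)=1/a$) and your slightly cleaner bound $z\ge\pi e>1$ for the $z\ne1$ check.
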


\begin{proof}
$\,$~Let us denote by $\,\overline{\mathbb{Q}}\,$ the set of all algebraic numbers and $\overline{\mathbb{Q}}^{\,*}$ the set of all non-null algebraic numbers. First, note that $k(y) := {\,1/\sin(\pi y)} \, \in \overline{\mathbb{Q}}^{\,*}$ for every $y \in \mathbb{Q}_{(0,1)}$ and that, from Eq.~(\ref{eq:seno}), $\,\log{\Gamma{(y)}} + \log{\Gamma{(1-y)}} = \log{[k(y) \, \pi]}$.  Now, being $\,\log{[k(y) \, \pi]} \in \overline{\mathbb{Q}}\,$ for some $y \in \mathbb{Q}_{(0,1)}$, then $\,1 + \log{[k(y) \, \pi]}\,$ would also be an algebraic number. Therefore, $\log{e} + \log{[k(y) \, \pi]} = \log{[k(y) \, \pi \, e]} \in \overline{\mathbb{Q}}$ and, by Lemma~\ref{lem:HL1}, the number $k(y) \, \pi \, e$ would be either transcendental or $1$. However, it cannot be equal to $1$ because this would imply that $k(y) = 1/(\pi \, e) < 1$, which is impossible since $\,0 < \sin{(\pi \, y)} \le 1$, $\forall \, y \in (0,1)$. Therefore, the product $\,k(y) \, \pi \, e\,$ has to be a transcendental number. Since $k(y) \in \overline{\mathbb{Q}}^{\,*}$, then $\,\pi \, e\,$ has to be transcendental.
\end{proof}



\section*{Appendix}

Let us explain why Conjecture~\ref{conj:GMR} --- i.e., the assertion that $\,f(x)=\log{\Gamma{(x)}}+\log{\Gamma{(1-x)}}\,$ is transcendental with at most \emph{one} possible exception, $x$ being a rational in $(0,1)$ --- implies that if an exception exists then it has to be just $\,f(\frac12) = \log{\pi}$.  The fact that $f(1-x)=f(x)$ for all $x \in (0,1)$ implies that, if the only exception would take place for some rational $\,x \ne \frac12\,$, then automatically there would be another rational $\,1-x \ne \frac12\,$ at which the function would also assume an algebraic value, contrarily to Conjecture~\ref{conj:GMR}.

\newpage

\section*{Figures}

\begin{figure}[h]
\centering
\includegraphics[scale=0.39]{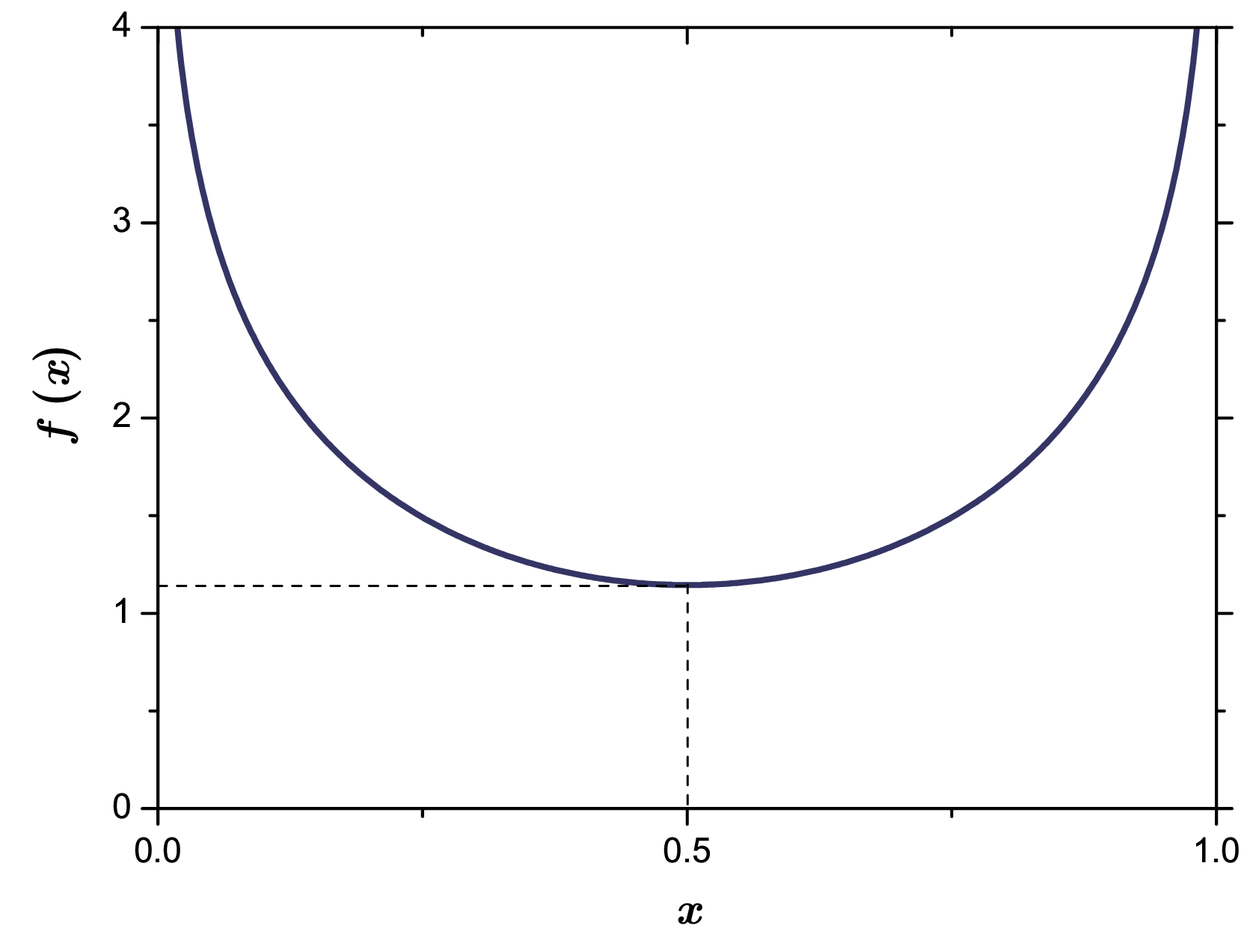}
\caption{The graph of the function $f(x) = \log{\Gamma(x)} + \log{\Gamma(1-x)} = \log{\pi} -\log{\left[\sin{(\pi x)}\right]}$ in the interval $(0,1)$. Since $\,f(1-x) = f(x)$, the graph is symmetric with respect to $x=\frac12$. Note that, as $0 < \sin{(\pi \, x)} \le 1$ for all $x \in (0,1)$, then $\log{\sin{\!(\pi x)}}\le 0$, and then $f(x) \ge \log{\pi}$ and the minimum of $f(x)$, $x \in (0,1)$, is attained at $\,x=\frac12$, where $f(x)$ evaluates to $\log{\pi}$. The dashed lines highlight this point.}
\label{fig:fx}
\end{figure}

\end{document}